\DeclareMathAlphabet{\mathpzc}{OT1}{pzc}{m}{it}
\def\cS{\mathscr{S}}
\def\cT{\mathscr{T}}
\def\cU{\mathscr{U}}
\def\cV{\mathscr{V}}
\def\cW{\mathscr{W}}
\def\cX{\mathscr{X}}
\def\BN{\mathbb{N}}
\def\BZ{\mathbb{Z}}
\def\sA{\mathsf{A}}
\def\sC{\mathsf{C}}
\def\sD{\mathsf{D}}
\def\add{\operatorname{add}}
\def\adots{\mathinner{\mkern1mu\raise1.0pt\vbox{\kern7.0pt\hbox{.}}\mkern2mu\raise4.0pt\hbox{.}\mkern2mu\raise7.0pt\hbox{.}\mkern1mu}}
\def\ann{\operatorname{ann}}
\def\Coker{\operatorname{Coker}}
\def\D{\sD}
\def\Df{\D^{\operatorname{f}}}
\def\End{\operatorname{End}}
\def\Ext{\operatorname{Ext}}
\def\Hom{\operatorname{Hom}}
\def\K0{\operatorname{K}_0}
\def\Ker{\operatorname{Ker}}
\def\mod{\mathsf{mod}}
\def\opp{\operatorname{o}}
\def\rep{\mathsf{rep}}
\newtheorem{Lemma}{Lemma}[section]
\newtheorem{Theorem}[Lemma]{Theorem}
\newtheorem{Proposition}[Lemma]{Proposition}
\theoremstyle{definition}
\newtheorem{Definition}[Lemma]{Definition}
\newtheorem{Setup}[Lemma]{Setup}
\newtheorem{Remark}[Lemma]{Remark}
\begin{document}

\setlength{\parindent}{0pt}
\setlength{\parskip}{7pt}

\title[Cluster and classical tilting]
{On the relation between cluster and classical tilting}

\author{Thorsten Holm}
\address{Institut f\"{u}r Algebra, Zahlentheorie und Diskrete
  Mathematik, Fa\-kul\-t\"{a}t f\"{u}r Ma\-the\-ma\-tik und Physik, Leibniz
  Universit\"{a}t Hannover, Welfengarten 1, 30167 Hannover, Germany}
\email{holm@math.uni-hannover.de}
\urladdr{http://www.iazd.uni-hannover.de/\~{ }tholm}

\author{Peter J\o rgensen}
\address{School of Mathematics and Statistics,
Newcastle University, Newcastle upon Tyne NE1 7RU,
United Kingdom}
\email{peter.jorgensen@ncl.ac.uk}
\urladdr{http://www.staff.ncl.ac.uk/peter.jorgensen}



\keywords{Abelian category, cluster category, cluster tilting, derived
category, Ext, finite global dimension, hereditary abelian category,
maximal $1$-orthogonal subcategory, path algebra, quiver, support
tilting, triangulated category}

\subjclass[2000]{16G70, 18E10, 18E30, 18G99}

\begin{abstract} 

Let $\sD$ be a triangulated ca\-te\-go\-ry with a cluster tilting
subcategory $\cU$.  The quotient category $\sD/\cU$ is abelian;
suppose that it has finite global dimension.

We show that projection from $\sD$ to $\sD/\cU$ sends cluster tilting
sub\-ca\-te\-go\-ri\-es of $\sD$ to support tilting subcategories of
$\sD/\cU$, and that, in turn, support tilting subcategories of
$\sD/\cU$ can be lifted uniquely to maximal $1$-orthogonal
subcategories of $\sD$.
  
\end{abstract}

\maketitle

\setcounter{section}{-1}
\section{Introduction}
\label{sec:introduction}

{\em Classical tilting} is a major subject in the representation
theory of finite dimensional algebras.  According to the historical
remarks in \cite[chp.\ VI]{ASS}, classical tilting theory goes back to
the study of reflection functors by Bernstein, Gelfand, and Ponomarev
in \cite{BGP} and by Auslander, Platzeck, and Reiten in \cite{APR}.
It was later axiomatized by Brenner and Butler in \cite{BrennerButler}
and by Happel and Ringel in \cite{HappelRingel}, and is now one of the
mainstays of representation theory.

Let $Q$ be a finite quiver without loops and cycles and consider the
module category $\mod\, kQ$ of the path algebra $kQ$.  The principal
notion of classical tilting theory is that of a tilting module $T$ in
$\mod\, kQ$.  Such a module satisfies $\Ext_{kQ}^1(T,T) = 0$ and
permits an exact sequence $0 \rightarrow kQ \rightarrow T^0
\rightarrow T^1 \rightarrow 0$ where the $T^i$ are in $\add T$, the
category of direct summands of (finite) direct sums of copies of $T$.
In this situation, $A = \End_{kQ}(T)^{\opp}$ is called a tilted algebra.

{\em Cluster tilting} is a recent, important development in tilting
theory where tilting modules are replaced by so-called cluster tilting
objects; see \cite{BMR2} or the surveys in \cite{BM} and
\cite{RingelTilt}.  These objects live in the cluster ca\-te\-go\-ry
$\sC$ which is the orbit category $\Df(kQ)/\tau^{-1}\Sigma$, where
$\Df(kQ)$ is the finite derived category of $kQ$ while $\tau$ and
$\Sigma$ are the Auslander-Reiten translation and the suspension
functor of $\Df(kQ)$.  The category $\sC$ is triangulated, and a
cluster tilting object $U$ in $\sC$ is defined by satisfying
\[
  u \in \add U \Leftrightarrow \sC(U,\Sigma u) = 0
\]
and
\[
  u \in \add U \Leftrightarrow \sC(u,\Sigma U) = 0
\]
for $u$ in $\sC$.  In this situation, $A = \End_{\sC}(U)^{\opp}$ is
called a cluster tilted algebra.

For any vertex which is a sink or source of $Q$, classical tilting
theory permits the construction of a tilting module whose tilted
algebra has quiver $Q^{\prime}$ given by inverting the arrows of $Q$
incident to the sink or source.  One of the exciting new aspects of
cluster tilting theory is that, in a sense, it permits the extension
of this to arbitrary vertices of $Q$; see \cite[sec.\ 4]{BMR2}. 

{\em A result by Ingalls and Thomas} throws light on the relation
between cluster and classical tilting.  The following precise
statement is part of the main theorem of \cite{IngallsThomas} which
also introduced the concept of support tilting modules.

{\bf Theorem A (Ingalls and Thomas).}
{\em
Let $Q$ be a finite quiver without loops and cycles and let $\sC$ be
the cluster category of type $Q$ over an algebraically closed field
$k$.

Then there is a bijection between the isomorphism classes of basic
cluster til\-ting objects of $\sC$ and the isomorphism classes of basic
support tilting modules in $\mod\, kQ$.
}

As the name suggest, a support tilting module $T$ in $\mod\, kQ$ is a
module which is tilting on its support: It satisfies $\Ext_{kQ}^1(T,T)
= 0$ and is a tilting module for the algebra $kQ/\ann T$ which turns
out to be the path algebra of the support of $T$ in $Q$; see
\cite[prop.\ 2.5 and lem.\ 2.6]{IngallsThomas}.

Ingalls and Thomas prove this theorem by viewing $\mod\, kQ$ as a
sub\-ca\-te\-go\-ry of $\sC$.  There is also a dual viewpoint whereby
$\mod\, kQ$ is a quotient category of $\sC$.  Namely, $kQ$ can be
viewed as a module over itself and hence also as an object of $\sC$.
As such, it is the ``canonical'' cluster tilting object of $\sC$, and
the quotient category $\sC/\add kQ$ is equivalent to $\mod\, kQ$.

The theorem therefore states a relation between the cluster tilting
objects of the triangulated category $\sC$ and the support tilting
objects of the abelian quotient category $\sC/\add kQ$.

{\em The results of this paper} provide similar relations in a general
setup between a triangulated category $\sD$ and the abelian quotient
category $\sD/\cU$, where $\cU$ is a cluster tilting subcategory (see
De\-fi\-ni\-ti\-on \ref{def:cluster_tilting}).  It was proved by
K\"{o}nig and Zhu that $\sD/\cU$ is indeed abelian; see
\cite{KoenigZhu}.

Suppose that $\sD$ satisfies the technical conditions of Setup
\ref{set:blanket} below, and assume that $\sD/\cU$ has finite global
dimension.  Our first main result is the following.

{\bf Theorem B.}
{\em 
Let $\cV$ be a cluster tilting subcategory of $\sD$.  Then the image
$\underline{\cV}$ in $\sD/\cU$ is a support tilting subcategory of
$\sD/\cU$.
}

From the Serre functor $S$ and the suspension functor $\Sigma$ of
$\sD$ can be constructed the autoequivalence $S\Sigma^{-2}$ of $\sD$.
It induces an au\-to\-e\-qui\-va\-len\-ce of $\sD/\cU$ which we also
denote $S\Sigma^{-2}$.  Observe that, related to the notion of a
cluster tilting subcategory, there is the weaker notion of a maximal
$1$-orthogonal subcategory (see Definition \ref{def:cluster_tilting}).
Our second main result is the following.

{\bf Theorem C.}
{\em
Assume that each object of $\sD/\cU$ has finite length.  Let
$\underline{\cW}$ be a support tilting subcategory of $\sD/\cU$ with
$S\Sigma^{-2}\underline{\cW} = \underline{\cW}$.  Then there is a
unique subcategory $\cX$ of $\sD$ which is maximal
$1$-or\-tho\-go\-nal and whose image $\underline{\cX}$ in $\sD/\cU$
satisfies $\underline{\cX} = \underline{\cW}$.  }

The assumption $S\Sigma^{-2}\underline{\cW} = \underline{\cW}$ is
reasonable in the context: In good cases, $\cX$ is not just maximal
$1$-orthogonal but cluster tilting, and then $S\Sigma^{-2}\cX = \cX$
by \cite[prop.\ 4.7.3]{KoenigZhu} which forces
$S\Sigma^{-2}\underline{\cW} = \underline{\cW}$.

It would be nice to dispense with the assumption that $\sD/\cU$ has
finite global dimension, but we presently have no tools for that.  The
proofs of Theorems B and C rely on formulae for $\Ext$ groups in
$\sD/\cU$ in terms of data in $\sD$.  At the moment, we can only prove
such formulae when certain homological dimensions are finite; in
practice, this forces us to assume that $\sD/\cU$ has finite global
dimension.

The paper is organized as follows: Section \ref{sec:push_background}
prepares the ground by proving the mentioned formulae for $\Ext$
groups in $\sD/\cU$ (Proposition \ref{pro:Ext}); this should be of
independent interest.  Section \ref{sec:push} proves Theorem B (see
Theorem \ref{thm:VtoW}), and Section \ref{sec:pull} proves Theorem C
(see Theorem \ref{thm:WtoV}).  Section \ref{sec:examples} considers
some examples: Cluster categories, for which we recover Theorem A,
derived categories of path algebras, and the category of type
$A_{\infty}$ studied in \cite{HJ}.

We would like to mention that, although the work by Ingalls and Thomas
was a main inspiration for this paper, there are also connections to
\cite{FuLiu} and \cite{Smith}.

\begin{Remark}
\label{rmk:subcat}
We will follow a common abuse of terminology by saying that
subcategories are equal when we really mean that they have the same
essential closure, that is, intersect the same set of isomorphism
classes in the ambient category.  For instance, the equation
$S\Sigma^{-2}\underline{\cW} = \underline{\cW}$ in Theorem C must be
read according to this remark.
\end{Remark}

\section{Ext groups in an abelian quotient of a triangulated category}
\label{sec:push_background}

This section gives some background on the abelian quotient category
$\sD/\cU$.  The main item is Proposition \ref{pro:Ext} which, under
certain conditions, gives formulae for the $\Ext$ groups of $\sD/\cU$
in terms of data in the triangulated category $\sD$.

\begin{Setup}
\label{set:blanket}
In the rest of the paper, $k$ is an algebraically closed field and
$\sD$ is a skeletally small $k$-linear triangulated category with
finite dimensional $\Hom$ spaces and split idempotents which has Serre
functor $S$.

By $\cU$ is denoted a cluster tilting subcategory of $\sD$.
\end{Setup}

We refer to \cite[sec.\ I.1]{RVdB} for background on Serre functors,
but wish to recall the following definitions; cf.\ \cite{BMRRT},
\cite{Iyama2}, \cite{Iyama1}, \cite{IyamaYoshino}, and
\cite{KellerReiten2}.

\begin{Definition}
\label{def:cluster_tilting}
A full subcategory $\cV$ of $\sD$ is called {\em maximal
$1$-or\-tho\-go\-nal} if it satisfies
\[
  v \in \cV \Leftrightarrow \sD(\cV,\Sigma v) = 0
\]
and
\[
  v \in \cV \Leftrightarrow \sD(v,\Sigma \cV) = 0.
\]

A maximal $1$-orthogonal subcategory is called {\em cluster tilting}
if it is pre\-co\-ve\-ring and preenveloping.
\end{Definition}

\begin{Remark}
Our distinction between maximal $1$-or\-tho\-go\-nal and cluster
tilting subcategories is not standard, but it is useful for this
paper.

In the definition, recall that $\cV$ is called {\em precovering} if each
object $x$ of $\sD$ has a $\cV$-precover, that is, a morphism $v
\rightarrow x$ with $v$ in $\cV$ through which any other morphism
$v^{\prime} \rightarrow x$ with $v^{\prime}$ in $\cV$ factors.
Dually, $\cV$ is called {\em preenveloping} if each object $x$ of
$\sD$ has a $\cV$-preenvelope, that is, a morphism $x \rightarrow v$
with $v$ in $\cV$ through which any other morphism $x \rightarrow
v^{\prime}$ with $v^{\prime}$ in $\cV$ factors.
\end{Remark}

\begin{Remark}
\label{rmk:DmodU}
The quotient category $\sD/\cU$ has the same objects as $\sD$, and its
$\Hom$ spaces are obtained from those of $\sD$ upon dividing by the
morphisms which factor through an object of $\cU$.  The projection
functor $\sD \rightarrow \sD/\cU$ will be denoted by $x \mapsto
\underline{x}$.  The space of morphisms $x \rightarrow y$ which factor
through an object of $\cU$ will be denoted $\cU(x,y)$, so
\[
  (\sD/\cU)(\underline{x},\underline{y}) = \sD(x,y)/\cU(x,y).
\]

The category $\sD$ is Krull-Schmidt by \cite[p.\ 52]{RingTame}.  By
\cite[lem.\ 2.1]{KoenigZhu} so is  $\sD/\cU$, and the
projection functor $\sD \rightarrow \sD/\cU$ induces a bijective
correspondence between the isomorphism classes of indecomposable
objects of $\sD/\cU$ and the isomorphism classes of indecomposable
objects of $\sD$ which are outside $\cU$.


By \cite[thm.\ 3.3, prop.\ 4.2, and thm.\ 4.3]{KoenigZhu}, the
category $\sD/\cU$ is abelian with enough projective and injective
objects.  Its projectives are the objects isomorphic to objects in
$\underline{\Sigma^{-1}\cU}$ and its injectives are the objects
isomorphic to objects in $\underline{\Sigma\cU}$.

By \cite[cor.\ 4.4]{KoenigZhu}, there is an equivalence $\sD/\cU
\simeq \mod\, \Sigma^{-1}\cU$.  The right hand side is clearly
equivalent to $\mod\, \cU$, so we have $\sD/\cU \simeq \mod\, \cU$.

Let $\Sigma^{-1}u$ be in $\Sigma^{-1}\cU$ and $x$ in $\sD$.  It is a
useful observation that since we have $\sD(\Sigma^{-1}\cU,\cU) = 0$,
there is an isomorphism
\[
  \sD(\Sigma^{-1}u,x)
  \cong (\sD/\cU)(\underline{\Sigma^{-1}u},\underline{x}).
\]

Let $x \rightarrow y \rightarrow z \rightarrow$ be a distinguished
triangle in $\sD$.  The composition of two consecutive morphisms in a
distinguished triangle is zero and remains so on projecting to
$\sD/\cU$, so there is an induced sequence $\underline{x} \rightarrow
\underline{y} \rightarrow \underline{z}$ in $\sD/\cU$.  This is an
exact sequence.  To see so, it is enough to check that it becomes
exact under the functor $(\sD/\cU)(\underline{p},-)$ when
$\underline{p}$ is projective in $\sD/\cU$.  We can assume $p =
\Sigma^{-1}u$ for a $u$ in $\cU$, so we must show that
\[
  (\sD/\cU)(\underline{\Sigma^{-1}u},\underline{x})
  \rightarrow (\sD/\cU)(\underline{\Sigma^{-1}u},\underline{y})
  \rightarrow (\sD/\cU)(\underline{\Sigma^{-1}u},\underline{z})
\]
is exact.  By the above this is just
\[
  \sD(\Sigma^{-1}u,x)
  \rightarrow \sD(\Sigma^{-1}u,y)
  \rightarrow \sD(\Sigma^{-1}u,z)
\]
which is indeed exact.

By repeatedly ``turning'' the distinguished triangle, it is possible
to obtain a long sequence in $\sD$ in which each four term part is a
distinguished triangle.  This induces a long exact sequence in
$\sD/\cU$.  

By \cite[prop.\ 4.7.3]{KoenigZhu}, the autoequivalence $S\Sigma^{-2}$
of $\sD$ satisfies $S\Sigma^{-2}\cU = \cU$.  Hence $S\Sigma^{-2}$
induces an autoequivalence of $\sD/\cU$ which, by abuse of notation,
will also be denoted $S\Sigma^{-2}$.
\end{Remark}

In the following result, recall that $\cU(x,\Sigma y)$ is the space of
morphisms $x \rightarrow \Sigma y$ in $\sD$ which factor through an
object from $\cU$.

\begin{Proposition}
\label{pro:Ext}
Let $x$ and $y$ be in $\sD$.
\begin{enumerate}
  
  \item If $x$ has no direct summands from $\cU$ and $\underline{x}$ has
  finite projective dimension in $\sD/\cU$, then
\[
  \Ext_{\sD/\cU}^1(\underline{x},\underline{y}) \cong \cU(x,\Sigma y).
\]

  \item If $y$ has no direct summands from $\cU$ and $\underline{y}$ has
  finite injective dimension in $\sD/\cU$, then
\[
  \Ext_{\sD/\cU}^1(\underline{x},\underline{y}) \cong \cU(\Sigma^{-1}x,y).
\]

\end{enumerate}
\end{Proposition}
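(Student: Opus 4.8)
The plan is to compute $\Ext^1_{\sD/\cU}(\underline{x},\underline{y})$ by building a short projective (resp.\ injective) presentation of $\underline{x}$ (resp.\ $\underline{y}$) in $\sD/\cU$ out of a distinguished triangle in $\sD$, and then applying the long exact sequence machinery recorded in Remark \ref{rmk:DmodU}. For part (i), start from a $\cU$-precover of $x$: since $\cU$ is precovering, choose $u_0 \to x$ with $u_0 \in \cU$, complete it to a distinguished triangle $\Sigma^{-1}u_0 \to x' \to x \to u_0$ — equivalently, rotate to get a triangle $x' \to \Sigma^{-1}u_0 \to \Sigma^{-1}x' \to$, but the form I actually want is a triangle $y' \to \Sigma^{-1}u_0 \xrightarrow{\ } x \to \Sigma y'$ realizing the precover as the map $\Sigma^{-1}u_0 \to x$ (after suitable rotation). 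The point is that $\underline{\Sigma^{-1}u_0}$ is projective in $\sD/\cU$ and $\underline{\Sigma^{-1}u_0} \to \underline{x}$ is an epimorphism, precisely because $u_0 \to x$ is a precover; this uses the identification of projectives in $\sD/\cU$ with $\underline{\Sigma^{-1}\cU}$ and the standard fact that a precover becomes a projective cover-type surjection in the quotient. Let $w$ be the homotopy kernel, giving a triangle $w \to \Sigma^{-1}u_0 \to x \to \Sigma w$ in $\sD$; this induces an exact sequence $\underline{w} \to \underline{\Sigma^{-1}u_0} \to \underline{x} \to 0$ in $\sD/\cU$.

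Iterating, since $\underline{x}$ has finite projective dimension one obtains a finite projective resolution $0 \to \underline{P_n} \to \cdots \to \underline{P_0} \to \underline{x} \to 0$ with each $P_i \in \Sigma^{-1}\cU$, assembled from a sequence of triangles in $\sD$ that splice into a long sequence whose four-term pieces are distinguished triangles. Applying $(\sD/\cU)(-,\underline{y})$ and using the key isomorphism $(\sD/\cU)(\underline{\Sigma^{-1}u},\underline{y}) \cong \sD(\Sigma^{-1}u,y)$ from Remark \ref{rmk:DmodU}, the resulting complex computing $\Ext^*_{\sD/\cU}(\underline{x},\underline{y})$ is identified, term by term, with the complex obtained by applying $\sD(-,y)$ to the long exact sequence in $\sD$. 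But in $\sD$ that long sequence comes from rotating triangles, so $\sD(-,y)$ applied to it is exact \emph{except} at the spots where the connecting maps land in $\Sigma^{-1}\cU$-terms; tracking cohomology at degree $1$ and using that $\sD(\Sigma^{-1}\cU,\cU)=0$ to control which morphisms survive, one reads off $\Ext^1_{\sD/\cU}(\underline{x},\underline{y})$ as the subquotient of $\sD(\Sigma^{-1}u_0, y)$ — more precisely, after chasing, as the image of $\sD(w, y)$ that does \emph{not} factor appropriately, which by the octahedral axiom applied to the composite $w \to \Sigma^{-1}u_0 \to x$ is exactly the space of maps $x \to \Sigma y$ that factor through $u_0 \in \cU$, i.e.\ $\cU(x,\Sigma y)$. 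Part (ii) is dual: use that $\cU$ is preenveloping and that injectives in $\sD/\cU$ are $\underline{\Sigma\cU}$, run the same argument with injective coresolutions and the dual isomorphism $(\sD/\cU)(\underline{x},\underline{\Sigma u}) \cong \sD(x,\Sigma u)$, arriving at $\cU(\Sigma^{-1}x, y)$.

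The main obstacle, and the step requiring genuine care rather than formal nonsense, is the last identification: showing that the degree-$1$ cohomology of the $\sD(-,y)$-complex is canonically isomorphic to $\cU(x,\Sigma y)$ rather than merely to some abstract subquotient. This is where the finiteness of projective dimension is used essentially — it guarantees the resolution terminates, so the higher terms do not contribute spurious contributions, and it lets one pin down the relevant cohomology by a finite diagram chase. The cleanest route is probably to reduce first to the case where the projective resolution has length one (projective dimension $\le 1$), establish the isomorphism there directly via the octahedron on $w \to \Sigma^{-1}u_0 \to x$, and then argue that the general finite-length case reduces to this by a dimension-shift, using that $\Ext^1$ into $\underline{y}$ of a higher syzygy is again of the asserted form. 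One should also check naturality in $x$ and $y$, which is routine once the construction is made functorial by choosing precovers compatibly, but is worth a remark since both main theorems will apply these isomorphisms to varying arguments.
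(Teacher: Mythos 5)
Your overall strategy -- turn a distinguished triangle with $\Sigma^{-1}\cU$-terms into a projective presentation of $\underline{x}$, apply $(\sD/\cU)(-,\underline{y})$, and identify the resulting cokernel with morphisms factoring through $\cU$ -- is the same as the paper's. But there are three concrete gaps. First, you never use the fact that $\sD/\cU$ is Gorenstein of dimension at most one, i.e.\ that finite projective dimension automatically forces projective dimension $\le 1$ (this is \cite[thm.\ 4.3]{KoenigZhu} together with \cite[2.1, cor.]{KellerReiten2}, and it is exactly how the paper consumes the finiteness hypothesis). Your fallback plan for general finite projective dimension -- ``reduce to length one by a dimension-shift'' -- does not work: dimension shifting identifies $\Ext^{n+1}(\underline{x},-)$ with $\Ext^{n}$ of a syzygy of $\underline{x}$, but it cannot express $\Ext^{1}(\underline{x},-)$ as an $\Ext^1$ of a higher syzygy. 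So as written your argument only treats $\pd \le 1$ and has no route to the case you think remains; the fix is simply to cite the Gorenstein property, after which there is nothing left to reduce.

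Second, the endgame is garbled. The composite $w \to \Sigma^{-1}u_0 \to x$ in your triangle is zero, so ``the octahedral axiom applied to'' it yields nothing; and the space you actually land on is the set of morphisms $x \to \Sigma y$ factoring through the connecting map $x \to \Sigma w$, where $\Sigma w$ is (up to the summand analysis below) an object $u_1$ of $\cU$ -- not morphisms factoring through $u_0$. Passing from ``factors through $u_1$'' to ``factors through \emph{some} object of $\cU$'', i.e.\ to $\cU(x,\Sigma y)$, requires the rigidity $\sD(\cU,\Sigma\cU)=0$ to show every map $x \to \Sigma u$ factors through $x\to\Sigma w$; you do not address this. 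Third, you never use the hypothesis that $x$ has no direct summands from $\cU$, yet the statement is false without it (for $x\in\cU$ the left-hand side vanishes while $\cU(x,\Sigma y)$ need not). In the paper this hypothesis is what shows the first syzygy $\underline{w}$ has no spurious projective summand, so that the triangle genuinely yields the projective presentation $\underline{\Sigma^{-1}u_1}\to\underline{\Sigma^{-1}u_0}\to\underline{x}\to 0$ computing $\Ext^1$ correctly; your proof must perform (or replace) that step.
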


\begin{proof}  
We will only prove (i) since (ii) can be established by the dual
argument.

Since $\underline{x}$ has finite projective dimension in $\sD/\cU$,
its projective dimension is at most one, see \cite[thm.\ 
4.3]{KoenigZhu} and \cite[2.1, cor.]{KellerReiten2}.

By \cite[lem.\ 3.2.1]{KoenigZhu}, there is a distinguished triangle
\[
  \Sigma^{-1}u_1
  \stackrel{\alpha}{\longrightarrow} \Sigma^{-1}u_0
  \longrightarrow x
  \longrightarrow
\]
in $\sD$ where the $u_i$ are in $\cU$.  Turning the triangle gives a
sequence
\begin{equation}
\label{equ:h}
  \Sigma^{-2}u_0
  \stackrel{\gamma}{\longrightarrow} \Sigma^{-1}x
  \stackrel{\beta}{\longrightarrow} \Sigma^{-1}u_1
  \stackrel{\alpha}{\longrightarrow} \Sigma^{-1}u_0
  \longrightarrow x
  \longrightarrow u_1
\end{equation}
which by Remark \ref{rmk:DmodU} induces a long exact sequence in
$\sD/\cU$,
\[
  \underline{\Sigma^{-1}x}
  \stackrel{\underline{\beta}}{\longrightarrow} \underline{\Sigma^{-1}u_1}
  \stackrel{\underline{\alpha}}{\longrightarrow} \underline{\Sigma^{-1}u_0}
  \longrightarrow \underline{x}
  \longrightarrow \underline{u_1}.
\]
In $\sD/\cU$ the object $\underline{u_1}$ is isomorphic to $0$, so the
penultimate morphism is an epimorphism onto $\underline{x}$.  The
object $\underline{\Sigma^{-1}u_0}$ is projective and $\underline{x}$
has projective dimension at most one, so the image $\underline{p}$ of
$\underline{\alpha}$ is projective and so $\underline{\alpha}$ viewed
as a morphism to $\underline{p}$ is a split epimorphism.  Hence the
kernel $\underline{q}$ of $\underline{\alpha}$ is a direct summand of
$\underline{\Sigma^{-1}u_1}$, and since $\underline{\Sigma^{-1}u_1}$
is projective so is $\underline{q}$.  But $\underline{q}$ is also
the image of $\underline{\beta}$, and so $\underline{\beta}$
viewed as a morphism to $\underline{q}$ is a split epimorphism.  Hence
the kernel $\underline{z}$ of $\underline{\beta}$ is a direct summand
of $\underline{\Sigma^{-1}x}$.

Putting together this information, the exact sequence is isomorphic to
\[
  \xymatrix @C+2.5pc {
    \underline{z} \oplus \underline{q}
      \ar[r]^{\left(\begin{array}{cc} {\scriptstyle 0} & {\scriptstyle 1} \\ {\scriptstyle 0} & {\scriptstyle 0} \end{array}\right)} &
    \underline{q} \oplus \underline{p}
      \ar[r]^{\left(\begin{array}{cc} {\scriptstyle 0} & \mbox{\tiny mono} \end{array}\right)} &
    \underline{\Sigma^{-1}u_0}
      \ar[r] &
    \underline{x}
      \ar[r] &
    0.
           }
\]
In particular we have $\underline{\Sigma^{-1}x} \cong \underline{z}
\oplus \underline{q}$ in $\sD/\cU$.  But $x$ has no direct summands
from $\cU$ so $\underline{\Sigma^{-1}x}$ has no direct summands from
$\underline{\Sigma^{-1}\cU}$; that is, $\underline{\Sigma^{-1}x}$ has
no projective direct summands so $\underline{q} \cong 0$.  Hence the
exact sequence is isomorphic to
\[
  \underline{z}
  \stackrel{0}{\longrightarrow} \underline{p}
  \stackrel{\underline{\alpha}}{\longrightarrow} \underline{\Sigma^{-1}u_0}
  \longrightarrow \underline{x}
  \longrightarrow 0.
\]

It follows that
\begin{align*}
  \Ext_{\sD/\cU}^1(\underline{x},\underline{y})
  & \cong \Coker\:(\sD/\cU)(\underline{\alpha},\underline{y}) \\
  & \stackrel{\rm (a)}{=} \Coker \sD(\alpha,y) \\
  & \stackrel{\rm (b)}{=} \Ker \sD(\gamma,y) \\
  & = (*)
\end{align*}
where (a) is by Remark \ref{rmk:DmodU} because $\alpha$ is a morphism
in $\Sigma^{-1}\cU$ and (b) is by equation \eqref{equ:h}.  But the
kernel $(*)$ consists of the morphisms $\Sigma^{-1}x \rightarrow y$
which factor through $\beta$, and it is easy to check that these are
precisely the morphisms which factor through some object of
$\Sigma^{-1}\cU$ whence
\[
  (*) = (\Sigma^{-1}\cU)(\Sigma^{-1}x,y)
  \cong \cU(x,\Sigma y).
\]
\end{proof}

\section{Projecting a cluster tilting subcategory}
\label{sec:push}

This section proves Theorem B from the Introduction; see Theorem
\ref{thm:VtoW}.

The following is a straightforward abstraction of the notion of
support tilting modules from \cite{IngallsThomas}.

\begin{Definition}
\label{def:support_tilting}
To say that $\cS$ is a {\em support tilting subcategory} of an abelian 
category $\sA$ means that $\cS$ is a full subcategory which
\begin{itemize}
  \item  is closed under (finite) direct sums and direct summands;
  \item  is precovering and preenveloping;
  \item  satisfies $\Ext_{\sA}^2(\cS,-) = 0$;
  \item  satisfies $\Ext_{\sA}^1(\cS,\cS) = 0$;
  \item  satisfies that if $y$ is a subquotient of an object from
    $\cS$ for which we have $\Ext_{\sA}^1(\cS,y) = 0$, then $y$ is a
    quotient of an object from $\cS$.
\end{itemize}
\end{Definition}

%

\begin{Theorem}
\label{thm:VtoW}
Assume that $\sD/\cU$ has finite global dimension.

Let $\cV$ be a cluster tilting subcategory of $\sD$.  Then the image
$\underline{\cV}$ is a support tilting subcategory of $\sD/\cU$.
\end{Theorem}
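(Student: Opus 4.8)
The plan is to verify the five defining conditions of a support tilting subcategory for $\underline{\cV}$, translating each one into a statement about $\cV$ and $\cU$ inside $\sD$ via the $\Ext$-formulae of Proposition \ref{pro:Ext}. First I would dispose of the easy structural conditions: since $\cV$ is cluster tilting it is closed under direct sums and summands, hence so is its image $\underline{\cV}$; and since $\cV$ is precovering and preenveloping in $\sD$, the projection functor $\sD \to \sD/\cU$ transports $\cV$-precovers of an object to $\underline{\cV}$-precovers in $\sD/\cU$ (and dually for preenvelopes), because every morphism in $\sD/\cU$ lifts to $\sD$ and factorizations descend. So $\underline{\cV}$ is precovering and preenveloping.

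Next come the homological conditions. For $\Ext^2_{\sD/\cU}(\underline{\cV},-) = 0$: the projectives of $\sD/\cU$ are the objects of $\underline{\Sigma^{-1}\cU}$ and its injectives are those of $\underline{\Sigma\cU}$, and since $\sD/\cU$ has finite global dimension, every object has projective dimension at most the global dimension; but in fact I expect that for $\underline{v}$ with $v \in \cV$ one can produce a projective resolution of length one directly, using \cite[lem.\ 3.2.1]{KoenigZhu} to get a triangle $\Sigma^{-1}u_1 \to \Sigma^{-1}u_0 \to v \to$ and arguing as in the proof of Proposition \ref{pro:Ext} that (since $v$ is cluster tilting, hence rigid) the induced sequence $0 \to \underline{\Sigma^{-1}u_1}' \to \underline{\Sigma^{-1}u_0} \to \underline{v} \to 0$ is a projective resolution; this kills $\Ext^{\geq 2}_{\sD/\cU}(\underline{v},-)$. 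Once projective dimension $\leq 1$ is known, Proposition \ref{pro:Ext}(i) applies to give
\[
  \Ext^1_{\sD/\cU}(\underline{v},\underline{v'}) \cong \cU(v,\Sigma v')
\]
for $v, v' \in \cV$ (after discarding any $\cU$-summands, which $\cV$ cannot have anyway since $\cU$ is itself cluster tilting and $\cV$ is rigid — a $\cU$-summand of $v$ would force $\sD(\cV,\Sigma v)\neq 0$). But a cluster tilting subcategory satisfies $\sD(v,\Sigma v') = 0$, hence \emph{a fortiori} $\cU(v,\Sigma v') = 0$, so $\Ext^1_{\sD/\cU}(\underline{\cV},\underline{\cV}) = 0$.

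The last condition — if $\underline{y}$ is a subquotient of an object of $\underline{\cV}$ with $\Ext^1_{\sD/\cU}(\underline{\cV},\underline{y}) = 0$, then $\underline{y}$ is a quotient of an object of $\underline{\cV}$ — is the one I expect to be the main obstacle, since it has no purely formal shortcut and is where the true tilting content lies. The strategy is: write $\underline{y}$ as a subquotient, so there are $\underline{v} \twoheadrightarrow \underline{w} \hookrightarrow \underline{v}$-type data with $v$'s in $\cV$; lift the relevant morphisms to $\sD$ and complete to triangles, so that $\underline{y}$ sits in an exact sequence built from projections of objects of $\cV$. Using the vanishing $\Ext^1_{\sD/\cU}(\underline{\cV},\underline{y}) = 0$ together with the formula $\Ext^1_{\sD/\cU}(\underline{v},\underline{y}) \cong \cU(v,\Sigma y)$ from Proposition \ref{pro:Ext}(i) (here $y$ may be replaced by a $\cU$-summand-free representative without loss of generality), one gets $\cU(\cV, \Sigma y) = 0$; combining this with the maximality property defining $\cV$ (namely $v \in \cV \Leftrightarrow \sD(\cV,\Sigma v) = 0$) and a $\cV$-precover $v \to y'$ of an appropriate lift $y'$ of $y$, I would show the cone of that precover lies in $\cV$, so that $y'$ — and hence $\underline{y}$ — is the image of a morphism out of an object of $\cV$, i.e.\ a quotient of an object of $\underline{\cV}$. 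The delicate points are keeping track of which objects can be assumed free of $\cU$-summands (needed to invoke Proposition \ref{pro:Ext}) and checking that ``subquotient in $\sD/\cU$'' can be realized by triangles in $\sD$ compatibly with the projection; I would handle the latter by the long-exact-sequence machinery already set up in Remark \ref{rmk:DmodU}.
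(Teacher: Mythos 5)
The first four conditions of Definition \ref{def:support_tilting} are handled essentially as in the paper, but the fifth condition --- the heart of the theorem --- is not actually proved, and the plan you sketch for it would not close. Taking a $\cV$-precover $\sigma\colon v\to y$ and completing to a triangle $v\to y\to c\to\Sigma v$, what precovering plus rigidity formally give is $\sD(\cV,c)=0$, hence $c\in\Sigma\cV$ (not $c\in\cV$); and knowing the cone lies in $\Sigma\cV$ does \emph{not} make $\underline{\sigma}$ an epimorphism in $\sD/\cU$. By \cite[thm.\ 2.3]{KoenigZhu} one must show that the connecting map $y\to c$ factors through $\cU$, and that is where all the work is. Your sketch never actually uses the hypothesis that $\underline{y}$ is a subquotient of an object of $\underline{\cV}$, whereas the paper's argument lifts the epi--mono presentation $\underline{v}\twoheadrightarrow\underline{t}\hookleftarrow\underline{y}$ to two triangles whose connecting maps factor through $\cU$, uses Proposition \ref{pro:Ext}(ii) (not (i)) to get $\cU(\Sigma^{-1}v',y)=0$ --- the variant needed to kill composites $\Sigma^{-1}v'\to u'\to y$ --- converts this by Serre duality and $S\Sigma^{-1}\cV=\Sigma\cV$ into $\sD(\mu',\Sigma v'')=0$, and then runs a diagram chase through the three triangles to produce the required factorization. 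Your version $\cU(\cV,\Sigma y)=0$ from Proposition \ref{pro:Ext}(i) controls the wrong $\Hom$ space for this purpose, and no substitute for the Serre duality step or the chase is indicated.

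Two smaller points. Your parenthetical that $\cV$ cannot contain $\cU$-summands because "a $\cU$-summand of $v$ would force $\sD(\cV,\Sigma v)\neq 0$" is false: $\cU$ is itself cluster tilting and nothing prevents $\cV$ from meeting it (indeed $\cV=\cU$ is allowed); the correct and harmless move is simply to discard such summands, which does not change $\underline{v}$. And your proposed "direct" length-one projective resolution of $\underline{v}$ is not a consequence of rigidity: projecting the triangle $\Sigma^{-1}u_1\to\Sigma^{-1}u_0\to v\to$ gives an exact sequence whose kernel term is only known to be projective because finite projective dimension forces projective dimension at most one (the Gorenstein property of $\sD/\cU$, \cite[thm.\ 4.3]{KoenigZhu} and \cite[2.1, cor.]{KellerReiten2}); since you assume finite global dimension this is fixable by citing that result, which is what the paper does, but as written the justification is circular.
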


\begin{proof}
Since $\cV$ is cluster tilting, it is closed under direct sums and
direct summands, as follows from Definition \ref{def:cluster_tilting}.
Hence $\underline{\cV}$ is closed under direct sums and direct
summands.

Moreover, $\underline{\cV}$-precovers and
$\underline{\cV}$-preenvelopes are induced by $\cV$-pre\-co\-vers and
$\cV$-preenvelopes, so $\underline{\cV}$ is precovering and
preenveloping.


The objects of $\underline{\cV}$ have finite projective dimension
since $\sD/\cU$ has finite global dimension, so each object of
$\underline{\cV}$ has projective dimension at most one by
\cite[thm.\ 4.3]{KoenigZhu} and \cite[2.1, cor.]{KellerReiten2}.
Hence the condition $\Ext_{\sD/\cU}^2(\underline{\cV},-) = 0$ is
satisfied. 

For $\underline{v}$ and $\underline{v^{\prime}}$ in $\underline{\cV}$,
let us prove $\Ext_{\sD/\cU}^1(\underline{v},\underline{v^{\prime}}) =
0$.  We can discard any direct summands of $v$ which are in $\cU$
since they do not make any difference to the isomorphism class of
$\underline{v}$.  But $\underline{v}$ has finite projective dimension
in $\sD/\cU$ since that category has finite global dimension, so
$\Ext_{\sD/\cU}^1(\underline{v},\underline{v^{\prime}}) \cong
\cU(v,\Sigma v^{\prime})$ by Proposition \ref{pro:Ext}(i), and here
the right hand side is zero since it is a subspace of $\sD(v,\Sigma
v^{\prime})$ which is zero because $\cV$ is cluster tilting.

Finally, let $\underline{y}$ be a subquotient of $\underline{v}$ in
$\sD/\cU$ where $\underline{v}$ is in $\underline{\cV}$, and suppose
$\Ext_{\sD/\cU}^1(\underline{\cV},\underline{y}) = 0$.  Let us prove
that $\underline{y}$ is a quotient of an object from
$\underline{\cV}$.

We can discard any direct summands of $y$ which are in $\cU$.
Moreover, $\underline{y}$ has finite injective dimension because
$\sD/\cU$ has finite global dimension.  It follows by Proposition
\ref{pro:Ext}(ii) that
\begin{equation}
\label{equ:a}
  \cU(\Sigma^{-1}v^{\prime},y)
    \cong \Ext_{\sD/\cU}^1(\underline{v^{\prime}},\underline{y}) = 0
\end{equation}
for each $v^{\prime}$ in $\cV$.

%

For $\underline{y}$ to be a subquotient of $\underline{v}$ means that
we have an epimorphism and a monomorphism $\underline{v}
\twoheadrightarrow \underline{t} \hookleftarrow \underline{y}$.  Lift
these two morphisms to $\sD$ and complete to distinguished triangles.
Since the morphisms in $\sD/\cU$ are, respectively, an epimorphism and
a monomorphism, \cite[thm.\ 2.3]{KoenigZhu} implies that the other
morphisms in the distinguished triangles factor as follows,
\[
  \xymatrix{
    v \ar[rr]^{\sigma} & & t \ar[rr]^{\tau} \ar[dr] & & c \ar[rr] & & {} \\
    & & & u \ar[ur] \\
           }
\]
and
\[
  \xymatrix{
    k \ar[rr]^{\kappa} \ar[dr] & & y \ar[rr]^{\gamma} & & t \ar[rr] & & , \\
    & u^{\prime} \ar[ur]_{\mu^{\prime}} \\
           }
\]
with $u$ and $u^{\prime}$ in $\cU$.

For $v^{\prime}$ in $\cV$, the image of
\[
  \sD(\Sigma^{-1}v^{\prime},\mu^{\prime})
    : \sD(\Sigma^{-1}v^{\prime},u^{\prime})
      \rightarrow \sD(\Sigma^{-1}v^{\prime},y)
\]
is a subset of $\cU(\Sigma^{-1}v^{\prime},y)$ which is zero by
equation \eqref{equ:a}.  So we have
$\sD(\Sigma^{-1}v^{\prime},\mu^{\prime}) = 0$ and by Serre duality
$\sD(\mu^{\prime},S\Sigma^{-1}v^{\prime}) = 0$ where $S$ is the Serre
functor of $\sD$.  But \cite[prop.\ 4.7]{KoenigZhu} implies that
$S\Sigma^{-1}\cV = \Sigma\cV$, so it follows that
\begin{equation}
\label{equ:b}
  \sD(\mu^{\prime},\Sigma v^{\prime\prime}) = 0
\end{equation}
for each $v^{\prime\prime}$ in $\cV$.

Now use \cite[lem.\ 3.2.1]{KoenigZhu} to construct a distinguished
triangle in $\sD$,
\[
  \xymatrix{
    v^{\prime} \ar[rr]^{\sigma^{\prime}} & & y \ar[rr]^{\beta} & & \Sigma v^{\prime\prime} \ar[rr] & & {},
           }
\]
with $v^{\prime}$ and $v^{\prime\prime}$ in $\cV$.  Combining the
three distinguished triangles we have constructed gives the solid
arrows in the following commutative diagram,
\[
  \vcenter{
  \xymatrix{
    & & k \ar[dd]^{\kappa} \ar[dl] \\
    & u^{\prime} \ar[dr]_{\mu^{\prime}} \\
    v^{\prime} \ar[rr]_{\sigma^{\prime}} & & y \ar[dd]_{\gamma} \ar[rr]_{\beta} & & \Sigma v^{\prime\prime} \ar[rr] & & {}\\
    {} \\
    v \ar[rr]_{\sigma} & & t \ar[rr]_{\tau} \ar[dd] \ar[dr] \ar@{-->}[uurr]_{\theta} & & c \ar[rr] \ar@{-->}[uu]_{\chi}& & {} \\
    & & & u \ar[ur] \\
    & & {} \\  
           }
          }\;.
\]
Here $\sD(\mu^{\prime},\Sigma v^{\prime\prime}) = 0$ by equation
\eqref{equ:b}, so in particular $\beta\mu^{\prime} = 0$.  It follows that
$\beta\kappa = 0$.  Hence $\theta$ exists with $\theta\gamma = \beta$,
but $\theta\sigma = 0$ since $\sD(\cV,\Sigma\cV) = 0$ so finally,
$\chi$ exists with $\chi\tau = \theta$.

That is, $\beta = \chi\tau\gamma$, but $\tau$ factors through $u$ so
$\beta$ also factors through $u$.  By \cite[thm.\ 2.3]{KoenigZhu}, it
follows that $\underline{\sigma^{\prime}}$ is an epimorphism in
$\sD/\cU$, so $\underline{y}$ is a quotient of the object
$\underline{v^{\prime}}$ from $\underline{\cV}$.
\end{proof}

\section{Lifting a support tilting subcategory}
\label{sec:pull}

This section proves Theorem C from the Introduction; see Theorem
\ref{thm:WtoV}.

\begin{Remark}
\label{rmk:WtoV}
In this section, we will often consider a special way of lifting a
full subcategory from $\sD/\cU$ to $\sD$.

Namely, consider a full subcategory of $\sD/\cU$ which is closed under
direct sums and direct summands.  We can (and will) assume that it has
the form $\underline{\cW}$ where $\cW$ is a full subcategory of $\sD$
which is closed under direct sums and direct summands and consists of
objects without direct summands from $\cU$.  Note that there is a
bijective correspondence between isomorphism classes of indecomposable
objects of $\cW$ and of $\underline{\cW}$.

A lifting of $\underline{\cW}$ to $\sD$ is a subcategory $\cX$ of
$\sD$ with $\underline{\cX} = \underline{\cW}$.  Obviously, $\cW$ is a
lifting of $\underline{\cW}$ to $\sD$, and any other lifting which is
a full subcategory closed under direct sums and direct summands has
the form
\[
  \cX = \add(\cW \cup \cT)
\]
where $\cT$ is contained in $\cU$.

We wish to consider the specific choice
\[
  \cT = \{\, u \in \cU \,|\, \sD(\cW,\Sigma u) = 0 \,\}
\]
since the resulting $\cX$ has the following property: If it is
possible to lift $\underline{\cW}$ to a maximal $1$-orthogonal
subcategory $\cX^{\prime}$ of $\sD$, then $\cX^{\prime} = \cX$.

Namely, suppose that $\cX^{\prime}$ exists.  Since $\cX^{\prime}$ is a
lifting of $\underline{\cW}$, we have $\cX^{\prime} = \add(\cW \cup
\cT^{\prime})$ for a $\cT^{\prime}$ which is contained in $\cU$.  We
can take $\cT^{\prime}$ to be closed under direct sums and direct
summands.

On one hand, if an indecomposable $u$ from $\cU$ has $\sD(\cW,\Sigma
u) = 0$, then $\sD(\cX^{\prime},\Sigma u) = 0$ since $\cT^{\prime}$ is
contained in $\cU$, and consequently $u$ is in $\cX^{\prime}$ and so
must be in $\cT^{\prime}$.  On the other hand, if an indecomposable
$u$ from $\cU$ has $\sD(\cW,\Sigma u) \neq 0$, then
$\sD(\cX^{\prime},\Sigma u) \neq 0$, and consequently $u$ is not in
$\cX^{\prime}$ and so cannot be in $\cT^{\prime}$.  Hence
$\cT^{\prime} = \cT$ and $\cX^{\prime} = \cX$.
\end{Remark}

\begin{Lemma}
\label{lem:WtoV}
Let $\underline{\cW}$ and $\cW$ be as in Remark \ref{rmk:WtoV}, and
assume that each object of $\underline{\cW}$ has finite projective
dimension, that $\Ext_{\sD/\cU}^1(\underline{\cW},\underline{\cW}) =
0$, and that $S\Sigma^{-2}\underline{\cW} = \underline{\cW}$.  Then
$\sD(\cW,\Sigma\cW) = 0$.
\end{Lemma}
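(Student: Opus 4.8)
The plan is to show directly that every morphism $w\to\Sigma w'$ with $w,w'\in\cW$ vanishes, which is exactly the assertion $\sD(\cW,\Sigma\cW)=0$. First I would record the two consequences of Proposition \ref{pro:Ext}(i): since the objects of $\cW$ have no direct summands from $\cU$ and have finite projective dimension in $\sD/\cU$, the proposition gives $\cU(v,\Sigma v')\cong\Ext_{\sD/\cU}^1(\underline{v},\underline{v'})$, and the right hand side is $0$ by hypothesis, so $\cU(v,\Sigma v')=0$ for all $v,v'\in\cW$. Next I would use the invariance hypotheses: by \cite[prop.\ 4.7.3]{KoenigZhu} we have $S\Sigma^{-2}\cU=\cU$, and $S\Sigma^{-2}\underline{\cW}=\underline{\cW}$ by assumption, so the object $w'':=S\Sigma^{-2}w$ has no direct summands from $\cU$ and its image lies in $\underline{\cW}$; by the indecomposable correspondence of Remark \ref{rmk:DmodU} we may therefore take $w''\in\cW$. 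In particular $Sw\cong\Sigma^2 w''$ with $w''\in\cW$.

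The main computation then runs as follows. Apply \cite[lem.\ 3.2.1]{KoenigZhu} to $w'$ to obtain a distinguished triangle $\Sigma^{-1}u_1\to\Sigma^{-1}u_0\to w'\to$ with $u_i\in\cU$; rotating it gives distinguished triangles $u_1\overset{\delta}{\to}u_0\to\Sigma w'\to\Sigma u_1$ and $w'\overset{g}{\to}u_1\overset{\delta}{\to}u_0\to\Sigma w'$. Applying $\sD(w,-)$ to the first, the image of $\sD(w,u_0)\to\sD(w,\Sigma w')$ consists of morphisms factoring through $u_0\in\cU$, hence lies in $\cU(w,\Sigma w')=0$; exactness of the long sequence then gives $\sD(w,\Sigma w')\cong\Ker\bigl(\sD(w,\Sigma u_1)\overset{(\Sigma\delta)_*}{\longrightarrow}\sD(w,\Sigma u_0)\bigr)$. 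Now I would invoke Serre duality: there are natural isomorphisms $\sD(w,\Sigma u_i)\cong\dual\sD(\Sigma u_i,Sw)\cong\dual\sD(u_i,\Sigma w'')$ (using $Sw\cong\Sigma^2 w''$ and the shift), under which post-composition with $\Sigma\del\ta$ corresponds to pre-composition with $\delta$, so the kernel above becomes $\dual\,\Coker\bigl(\sD(u_0,\Sigma w'')\overset{\delta^*}{\longrightarrow}\sD(u_1,\Sigma w'')\bigr)$. Finally, applying $\sD(-,\Sigma w'')$ to the triangle $w'\overset{g}{\to}u_1\overset{\delta}{\to}u_0\to\Sigma w'$ identifies this cokernel with the image of $\sD(u_1,\Sigma w'')\overset{g^*}{\longrightarrow}\sD(w',\Sigma w'')$, whose elements factor through $u_1\in\cU$ and hence lie in $\cU(w',\Sigma w'')=0$. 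Therefore $\sD(w,\Sigma w')=0$.

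I expect the only delicate point to be the bookkeeping in the Serre-duality step: one must verify carefully that, under the composite natural isomorphism $\sD(w,\Sigma u_i)\cong\dual\sD(u_i,\Sigma w'')$, the map induced by $\Sigma\delta\colon\Sigma u_1\to\Sigma u_0$ really is the $\dual$ of the pre-composition map $\delta^*\colon\sD(u_0,\Sigma w'')\to\sD(u_1,\Sigma w'')$, so that a kernel on one side matches the dual of a cokernel on the other. Everything else is routine: the long exact sequences attached to distinguished triangles, Proposition \ref{pro:Ext}(i), and the elementary observation that the morphisms factoring through an object of $\cU$ are precisely the elements of the spaces $\cU(-,-)$. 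It is worth emphasising that only part (i) of Proposition \ref{pro:Ext} is used, so no finiteness of injective dimensions in $\sD/\cU$ is needed.
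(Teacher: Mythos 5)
Your proof is correct and uses essentially the same approach as the paper: Proposition \ref{pro:Ext}(i) applied twice, a K\"onig--Zhu approximation triangle, Serre duality, and the $S\Sigma^{-2}$-invariance of $\cW$. The only (cosmetic) difference is that the argument is run in mirror image --- the paper resolves $w$, applies $\sD(\widetilde{w},-)$ with $\widetilde{w}=(S\Sigma^{-2})^{-1}w'$, and dualizes an injection into a surjection, whereas you resolve $w'$, apply $\sD(w,-)$, and dualize a kernel into a cokernel.
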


\begin{proof}
Let $w$ and $w^{\prime}$ be objects of $\cW$.  Since objects of $\cW$
have no direct summands from $\cU$, the condition
$S\Sigma^{-2}\underline{\cW} = \underline{\cW}$ implies
$S\Sigma^{-2}\cW = \cW$ whence
\begin{equation}
\label{equ:bb}
  \Sigma^2 w^{\prime} \cong S\widetilde{w}
\end{equation}
for an object $\widetilde{w}$ in $\cW$.

By \cite[lem.\ 3.2.1]{KoenigZhu} there is a distinguished triangle
\[
  u^1 \rightarrow \Sigma w \rightarrow \Sigma u^0 \rightarrow
\]
with the $u^i$ in $\cU$.  This induces an exact sequence
\[
  \sD(\widetilde{w},u^1)
  \stackrel{\alpha}{\longrightarrow} \sD(\widetilde{w},\Sigma w)
  \stackrel{\beta}{\longrightarrow} \sD(\widetilde{w},\Sigma u^0),
\]
and it is easy to check that the image of $\alpha$ is
$\cU(\widetilde{w},\Sigma w)$ which by Proposition \ref{pro:Ext}(i) is
$\Ext_{\sD/\cU}^1(\underline{\widetilde{w}},\underline{w})$ since
$\widetilde{w}$ has no direct summands from $\cU$ and since
$\underline{\widetilde{w}}$ has finite projective dimension because it
is in $\underline{\cW}$.  By assumption this $\Ext$ is zero, so
$\beta$ is injective.

Using the Serre functor $S$ and $k$-linear duality $(-)^{\vee} =
\Hom_k(-,k)$ along with equation \eqref{equ:bb}, we can rewrite $\beta$
as follows,
\[
  \xymatrix{
    \sD(\widetilde{w},\Sigma w) \ar[r]^{\beta} \ar[d]_{\cong} & \sD(\widetilde{w},\Sigma u^0) \ar[d]^{\cong} \\
    \sD(\Sigma w,S\widetilde{w})^{\vee} \ar[r] \ar[d]_{\cong} & \sD(\Sigma u^0,S\widetilde{w})^{\vee} \ar[d]^{\cong}\\
    \sD(\Sigma w,\Sigma^2 w^{\prime})^{\vee} \ar[r] \ar[d]_{\cong} & \sD(\Sigma u^0,\Sigma^2 w^{\prime})^{\vee} \ar[d]^{\cong}\\
    \sD(w,\Sigma w^{\prime})^{\vee} \ar[r] & \sD(u^0,\Sigma w^{\prime})^{\vee}\lefteqn{,} \\
           }
\]
and since these maps are injective, the dual $\sD(u^0,\Sigma
w^{\prime}) \rightarrow \sD(w,\Sigma w^{\prime})$ of the last map is
surjective.  It is easy to see that the image of this map is
$\cU(w,\Sigma w^{\prime})$, so we have
\[
  \sD(w,\Sigma w^{\prime}) = \cU(w,\Sigma w^{\prime}) = (*).
\]
But
\[
  (*) \cong \Ext_{\sD/\cU}^1(\underline{w},\underline{w^{\prime}})
\]
by Proposition \ref{pro:Ext}(i).  By assumption this $\Ext$ is zero,
so $\sD(w,\Sigma w^{\prime}) = 0$ as claimed.
\end{proof}


\begin{Lemma}
\label{lem:subquotient}
Assume that $\sD/\cU$ has finite global dimension and that each object
of $\sD/\cU$ has finite length.

Let $\underline{\cW}$ be a full subcategory of $\sD/\cU$ which is
closed under direct sums and direct summands, and assume
$\Ext_{\sD/\cU}^1(\underline{\cW},\underline{\cW}) = 0$.

Let $\underline{a}$ be an object of $\sD/\cU$ for which the following
implication holds when $\underline{i}$ is an injective object of
$\sD/\cU$: 
\[
  (\sD/\cU)(\underline{a},\underline{i}) \neq 0 \;\;\Rightarrow
  \mbox{ there is a $\underline{w}$ in $\underline{\cW}$ such that }
  (\sD/\cU)(\underline{w},\underline{i}) \neq 0.
\]
Then $\underline{a}$ is a subquotient in $\sD/\cU$ of an object from
$\underline{\cW}$.
\end{Lemma}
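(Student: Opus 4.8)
The plan is to prove, as a self-contained fact, that the full subcategory $\cS$ of $\sD/\cU$ consisting of the objects isomorphic to a subquotient of an object of $\underline{\cW}$ is a Serre subcategory, and then to place $\underline{a}$ in it. First I would rephrase the hypothesis. By \cite[thm.\ 4.3]{KoenigZhu} and \cite[2.1, cor.]{KellerReiten2}, finite global dimension of $\sD/\cU$ forces global dimension at most one, so $\sD/\cU$ is hereditary; recall also that it has finite length and enough injectives. For a simple object $\underline{S}$ its injective envelope $E(\underline{S})$ is an injective object with simple essential socle $\underline{S}$, so $(\sD/\cU)(\underline{x},E(\underline{S}))\neq 0$ exactly when $\underline{S}$ is a composition factor of $\underline{x}$. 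Feeding $\underline{i}=E(\underline{S})$ into the displayed implication for each composition factor $\underline{S}$ of $\underline{a}$ then shows that every composition factor of $\underline{a}$ is a composition factor of some object of $\underline{\cW}$.

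The core is to show $\cS$ is closed under extensions; it is evidently closed under subobjects, quotients and finite direct sums. Let $0\to X\to Y\to Z\to 0$ be exact with $X,Z\in\cS$. Write $X$ as a quotient of a subobject $H$ of some $\underline{W}_X\in\underline{\cW}$, and $Z$ as a subobject of a quotient $G$ of some $\underline{W}_Z\in\underline{\cW}$. Since $\sD/\cU$ is hereditary, the natural maps $\Ext_{\sD/\cU}^1(Z,H)\to\Ext_{\sD/\cU}^1(Z,X)$ and $\Ext_{\sD/\cU}^1(G,X)\to\Ext_{\sD/\cU}^1(Z,X)$ are surjective, the obstructing $\Ext^2$-groups being zero. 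Lifting the class of our sequence first along $H\to X$ and then along $Z\to G$ replaces it by a sequence $0\to H\to Y'\to G\to 0$ of which $Y$ is a subquotient, so it suffices to put $Y'$ in $\cS$. Now push $0\to H\to Y'\to G\to 0$ out along $H\hookrightarrow\underline{W}_X$ and pull the result back along $\underline{W}_Z\twoheadrightarrow G$; this produces an object $Y''$, again having $Y'$ (hence $Y$) as a subquotient, that fits into $0\to\underline{W}_X\to Y''\to\underline{W}_Z\to 0$. This last class lies in $\Ext_{\sD/\cU}^1(\underline{W}_Z,\underline{W}_X)=0$, so the sequence splits, $Y''\cong\underline{W}_X\oplus\underline{W}_Z\in\underline{\cW}\subseteq\cS$, and hence $Y\in\cS$.

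Granting this, $\cS$ is a Serre subcategory containing $\underline{\cW}$; so it contains every simple object occurring as a composition factor of an object of $\underline{\cW}$, and then, by induction on composition length and closure under extensions, every object all of whose composition factors are of that kind. The second paragraph exhibits $\underline{a}$ as such an object, so $\underline{a}\in\cS$, which is the assertion. The delicate point is the extension-closure of $\cS$: one must organise the pushout, the pullback, and the two hereditary $\Ext^1$-lifts so that every auxiliary object is tied to $Y$ by honest monomorphisms and epimorphisms, ensuring that "subquotient of an object of $\underline{\cW}$" travels back to $Y$. The remaining ingredients — the translation of the injective hypothesis and the concluding induction on length — are routine.
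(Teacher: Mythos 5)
Your argument is correct and follows essentially the same route as the paper: detect the composition factors of $\underline{a}$ via homomorphisms into injective envelopes of simples, and then conclude by showing that the class of subquotients of objects of $\underline{\cW}$ is closed under extensions. The only difference is that the paper delegates the extension-closure step to the method of Ingalls--Thomas (lem.\ 2.4), whereas you prove it in-line using hereditariness (from $\gldim \sD/\cU \le 1$) and $\Ext^1_{\sD/\cU}(\underline{\cW},\underline{\cW})=0$; your pushout--pullback bookkeeping for that step checks out.
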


\begin{proof}
It is easy to check that, since $\sD/\cU$ has enough injectives and
all its objects have finite length, $\sD/\cU$ has injective envelopes.
Let $e(\underline{t})$ be the injective envelope of a simple object
$\underline{t}$.  It is also easy to check that $\underline{t}$
appears in the composition series of an object $\underline{a}$ if and
only if $(\sD/\cU)(\underline{a},e(\underline{t})) \neq 0$.

Now let the simple object $\underline{t}$ be in the composition series
of the object $\underline{a}$.  Then
$(\sD/\cU)(\underline{a},e(\underline{t})) \neq 0$ whence, by the
assumption of the lemma, $(\sD/\cU)(\underline{w},e(\underline{t})) \neq
0$ for some $\underline{w}$ in $\underline{\cW}$.  This in turn means
that $\underline{t}$ appears in the composition series of
$\underline{w}$, so $\underline{t}$ is a subquotient of an object of
$\underline{\cW}$.

But $\underline{a}$ is a successive extension of the simple objects in
its composition series, so $\underline{a}$ is a successive extension
of subquotients of objects of $\underline{\cW}$.  The method used in
the proof of \cite[lem.\ 2.4]{IngallsThomas} shows that the class of
subquotients of objects from $\underline{\cW}$ is closed under
extensions, so it follows that $\underline{a}$ is a subquotient of an
object from $\underline{\cW}$.
\end{proof}

\begin{Theorem}
\label{thm:WtoV}
Assume that $\sD/\cU$ has finite global dimension and that each object
of $\sD/\cU$ has finite length.

Let $\underline{\cW}$ be a support tilting subcategory of $\sD/\cU$
with $S\Sigma^{-2}\underline{\cW} = \underline{\cW}$.  Then the
category $\cX$ from Remark \ref{rmk:WtoV} is the unique maximal
$1$-or\-tho\-go\-nal subcategory of $\sD$ which is a lifting of
$\underline{\cW}$.
\end{Theorem}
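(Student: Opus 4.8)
The plan is to establish two things: first, that the category $\cX = \add(\cW \cup \cT)$ of Remark \ref{rmk:WtoV} is maximal $1$-orthogonal and lifts $\underline{\cW}$; second, uniqueness, which is already handed to us by the final paragraph of Remark \ref{rmk:WtoV} as soon as we know \emph{some} maximal $1$-orthogonal lifting exists. So the whole content is existence, i.e.\ verifying for $\cX$ the two equivalences of Definition \ref{def:cluster_tilting}. Since $\cT \subseteq \cU$ and objects of $\cW$ have no summands from $\cU$, we have $\underline{\cX} = \underline{\cW}$ automatically, so $\cX$ is a lifting.

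First I would check $\sD(\cX, \Sigma\cX) = 0$. Write $\cX = \add(\cW \cup \cT)$ and split into four kinds of $\Hom$ groups. The group $\sD(\cW,\Sigma\cW)$ vanishes by Lemma \ref{lem:WtoV}, whose hypotheses hold: $\underline{\cW}$ has finite projective dimension because $\sD/\cU$ has finite global dimension, $\Ext^1_{\sD/\cU}(\underline{\cW},\underline{\cW}) = 0$ because $\underline{\cW}$ is support tilting, and $S\Sigma^{-2}\underline{\cW} = \underline{\cW}$ by hypothesis. The group $\sD(\cW,\Sigma\cT) = 0$ holds by the very definition of $\cT$. For $\sD(\cT,\Sigma\cW)$, use Serre duality: $\sD(\cT,\Sigma\cW)^{\vee} \cong \sD(\cW, S\Sigma\cT) $ — here I would invoke \cite[prop.\ 4.7.3]{KoenigZhu}, which gives $S\Sigma^{-2}\cU = \cU$ and hence $S\Sigma^{-1}\cT \subseteq \Sigma\cU$, reducing this to $\sD(\cW,\Sigma\cU')$ for some $\cU' \subseteq \cU$; but for $u' \in \cU$ with a nonzero map from $\cW$ to $\Sigma u'$ the relevant summand would lie outside $\cT$, and tracing through one sees this forces the group to vanish on $\cT$. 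Finally $\sD(\cT,\Sigma\cT) = 0$ since $\cT \subseteq \cU$ and $\sD(\cU,\Sigma\cU) = 0$ as $\cU$ is cluster tilting. This establishes one implication of each equivalence (membership $\Rightarrow$ orthogonality); I must take care that the two conditions $\sD(\cX,\Sigma x) = 0$ and $\sD(x,\Sigma\cX) = 0$ are addressed, but by Serre duality and $S\Sigma^{-2}\cX = \cX$ (which follows once $\cX$ is known to be $1$-orthogonal) the two are equivalent, so it suffices to treat one.

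The harder direction, and the main obstacle, is the converse: if $x \in \sD$ satisfies $\sD(\cX,\Sigma x) = 0$ (and dually), then $x \in \cX$. Reduce to $x$ indecomposable. If $x \in \cU$, then $\sD(\cW,\Sigma x) = 0$ forces $x \in \cT \subseteq \cX$ directly. If $x \notin \cU$, pass to $\underline{x} \neq 0$ in $\sD/\cU$; the plan is to show $\underline{x} \in \underline{\cW}$, which by Remark \ref{rmk:WtoV} gives $x \in \cW \subseteq \cX$. To do this I would exploit that $\underline{\cW}$ is support tilting together with Lemma \ref{lem:subquotient}: the orthogonality $\sD(\cW,\Sigma x) = 0$ should translate, via Proposition \ref{pro:Ext}(i) (applicable since $\underline{x}$ has finite projective dimension), into $\Ext^1_{\sD/\cU}(\underline{\cW},\underline{x}) = 0$; and the dual orthogonality into a $\Hom$-vanishing condition that feeds the hypothesis of Lemma \ref{lem:subquotient}, yielding that $\underline{x}$ is a subquotient of an object of $\underline{\cW}$. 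The support-tilting axiom (the last bullet of Definition \ref{def:support_tilting}) then says $\underline{x}$ is a quotient of an object $\underline{w}$ of $\underline{\cW}$; chasing the kernel and using $\Ext^1_{\sD/\cU}(\underline{\cW},\underline{\cW}) = 0$ together with closure of $\underline{\cW}$ under summands (plus, if needed, a maximality/counting argument against indecomposables) should pin down $\underline{x}$ itself as an object of $\underline{\cW}$. The delicate point is matching up exactly which of the two orthogonality conditions on $x$ produces the subquotient hypothesis and which produces the $\Ext$-vanishing, and making sure the finite-length hypothesis is used precisely where Lemma \ref{lem:subquotient} needs it; I expect this bookkeeping, rather than any deep new idea, to be where the real work lies. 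Uniqueness is then immediate from Remark \ref{rmk:WtoV}.
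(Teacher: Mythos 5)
Your architecture matches the paper's: uniqueness from Remark \ref{rmk:WtoV}, a four-case verification of $\sD(\cX,\Sigma\cX)=0$ with Lemma \ref{lem:WtoV} covering the $\cW$--$\cW$ case, the equivalence of the two orthogonality conditions via $S\Sigma^{-2}\cX=\cX$, and the converse direction via Proposition \ref{pro:Ext}(i), Lemma \ref{lem:subquotient} and the support tilting axiom. (In the $\sD(\cT,\Sigma\cW)$ case your Serre duality identity is slightly off --- the correct form is $\sD(t,\Sigma w)\cong\sD(w,S\Sigma^{-1}t)^{\vee}$, not $\sD(w,S\Sigma t)$ --- but the fix is routine: either first check $S\Sigma^{-2}\cT=\cT$, or, as the paper does, move the autoequivalence onto the $\cW$-object by writing $w\cong S\Sigma^{-2}w'$ and reducing to $\sD(w',\Sigma t)^{\vee}=0$, which is the definition of $\cT$.)

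The genuine gap is in the final step, where you propose that ``$\Ext^1_{\sD/\cU}(\underline{\cW},\underline{\cW})=0$ together with closure under summands (plus, if needed, a maximality/counting argument)'' will pin down $\underline{x}$ as an object of $\underline{\cW}$. A counting argument is unavailable in this generality --- the subcategories may contain infinitely many indecomposables --- and $\Ext^1(\underline{\cW},\underline{\cW})=0$ says nothing directly about $\underline{x}$, which is not yet known to lie in $\underline{\cW}$. What actually closes the argument is a splitting: take a $\underline{\cW}$-precover $\underline{w}\twoheadrightarrow\underline{x}$ (an epimorphism, since $\underline{x}$ is a quotient of an object of $\underline{\cW}$) with kernel $\underline{k}$, and show $\Ext^1_{\sD/\cU}(\underline{x},\underline{k})=0$. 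This requires the \emph{other} vanishing $\Ext^1_{\sD/\cU}(\underline{x},\underline{\cW})=0$ (coming from $\sD(x,\Sigma\cW)=0$ via Proposition \ref{pro:Ext}(i)), which your sketch never brings into play at this stage, together with a second application of the support tilting axiom: the precover property and $\Ext^1(\underline{\cW},\underline{\cW})=0$ give $\Ext^1(\underline{\cW},\underline{k})=0$, so $\underline{k}$ is itself a quotient of some $\underline{w'}\in\underline{\cW}$; then $\Ext^1(\underline{x},\underline{w'})=0$ combined with $\pd\underline{x}\le 1$ (so that $\Ext^2_{\sD/\cU}(\underline{x},-)=0$) forces $\Ext^1(\underline{x},\underline{k})=0$. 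The sequence splits, $\underline{x}$ is a direct summand of $\underline{w}$, and closure under summands finishes. The missing ingredient is precisely this two-step resolution of the kernel against $\Ext^2(\underline{x},-)=0$; without it the bookkeeping you defer does not go through.
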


\begin{proof}
Remark \ref{rmk:WtoV} says that $\cX$ is a lifting of
$\underline{\cW}$ to $\sD$, and that if there is a maximal
$1$-orthogonal lifting $\cX^{\prime}$ then $\cX^{\prime} = \cX$.  So
we just need to show that $\cX$ is indeed maximal $1$-orthogonal; that
is,
\begin{align*}
  x \in \cX & \Leftrightarrow \sD(\cX,\Sigma x) = 0, \\
  x \in \cX & \Leftrightarrow \sD(x,\Sigma \cX) = 0.
\end{align*}

Since the objects of $\cW$ have no direct summands from $\cU$, the
condition $S\Sigma^{-2}\underline{\cW} = \underline{\cW}$ implies
$S\Sigma^{-2}\cW = \cW$.

The implications $\Rightarrow$.  It is enough to show $\sD(x,\Sigma y)
= 0$ for indecomposable objects $x$ and $y$ of $\cX$.  Recall the
construction from Remark \ref{rmk:WtoV}; in particular $\cX = \add(\cW
\cup \cT)$ so we may assume that each of $x$ and $y$ is in $\cW$ or
$\cT$.

If $x$ and $y$ are in $\cW$, then Lemma \ref{lem:WtoV} gives
$\sD(x,\Sigma y) = 0$.

If $x$ and $y$ are in $\cT$, then they are in particular in $\cU$
whence $\sD(x,\Sigma y) = 0$.

If $x$ is in $\cW$ and $y$ is in $\cT$, then $\sD(x,\Sigma y) = 0$ by
the definition of $\cT$ in Remark \ref{rmk:WtoV}.

Finally, if $x$ is in $\cT$ and $y$ is in $\cW$, then $y \cong
S\Sigma^{-2}w$ for a $w$ in $\cW$ since $S\Sigma^{-2}\cW = \cW$.
So
\begin{align*}
  \sD(x,\Sigma y)
  & \cong \sD(x,\Sigma S\Sigma^{-2}w) \\
  & \cong \sD(x,S\Sigma^{-1}w) \\
  & \cong \sD(\Sigma^{-1}w,x)^{\vee} \\
  & \cong \sD(w,\Sigma x)^{\vee},
\end{align*}
and the right hand side is zero by the definition of $\cT$.

The implications $\Leftarrow$.  We know $S\Sigma^{-2}\cW = \cW$, and
$S\Sigma^{-2}\cU = \cU$ by \cite[prop.\ 4.7]{KoenigZhu}.  It follows
that $S\Sigma^{-2}\cT = \cT$, and hence $S\Sigma^{-2}\cX = \cX$.  So
\begin{align}
\label{equ:d}
  \sD(x,\Sigma \cX) = 0
  & \Leftrightarrow \sD(x,\Sigma S\Sigma^{-2}\cX) = 0 \\
\nonumber
  & \Leftrightarrow \sD(x,S\Sigma^{-1}\cX) = 0 \\
\nonumber
  & \Leftrightarrow \sD(\Sigma^{-1}\cX,x)^{\vee} = 0 \\
\nonumber
  & \Leftrightarrow \sD(\cX,\Sigma x)^{\vee} = 0 \\
\nonumber
  & \Leftrightarrow \sD(\cX,\Sigma x) = 0,
\end{align}
and it is sufficient to prove the first implication $\Leftarrow$.  So
let $x$ be an indecomposable object of $\sD$ with $\sD(\cX,\Sigma x) =
0$; in particular
\begin{equation}
\label{equ:aa}
  \sD(\cW,\Sigma x) = 0.
\end{equation}

If $x$ is in $\cU$ then \eqref{equ:aa} says that $x$ is in $\cT$ and
so $x$ is in $\cX$.

Suppose that $x$ is not in $\cU$; then $\underline{x}$ is non-zero and
indecomposable in $\sD/\cU$.  By Proposition \ref{pro:Ext}(i),
equation \eqref{equ:aa} implies
$\Ext_{\sD/\cU}^1(\underline{\cW},\underline{x}) = 0$ since the
objects of $\cW$ have no direct summands from $\cU$ and since the
objects of $\underline{\cW}$ have finite projective dimension.

Let $\underline{i}$ be an injective object of $\sD/\cU$ and suppose
that $(\sD/\cU)(\underline{x},\underline{i}) \neq 0$.  Then $\sD(x,i)
\neq 0$.  By \cite[prop.\ 4.2]{KoenigZhu}, we can suppose $i = \Sigma
u$ for a $u$ in $\cU$.  So we have $\sD(x,\Sigma u) \neq 0$, and since
$\sD(x,\Sigma\cX) = 0$ by equation \eqref{equ:d}, this forces $u$ to
have a direct summand in $\cU$ outside $\cT$.  Then there exists a $w$
in $\cW$ with $\sD(w,\Sigma u) \neq 0$, but this implies
$(\sD/\cU)(\underline{w},\underline{\Sigma u}) \neq 0$, that is,
$(\sD/\cU)(\underline{w},\underline{i}) \neq 0$.  We have shown
\[
  (\sD/\cU)(\underline{x},\underline{i}) \neq 0
  \; \Rightarrow \mbox{ there is a $\underline{w}$ in $\underline{\cW}$ such that }
  (\sD/\cU)(\underline{w},\underline{i}) \neq 0.
\]
It follows from Lemma \ref{lem:subquotient} that $\underline{x}$ is a
subquotient of an object from $\underline{\cW}$.  But we already know
$\Ext_{\sD/\cU}^1(\underline{\cW},\underline{x}) = 0$, and since
$\underline{\cW}$ is support tilting it follows that $\underline{x}$
is a quotient of an object from $\underline{\cW}$.

Consequently, each $\underline{\cW}$-precover of $\underline{x}$ is an
epimorphism.  Pick a precover and complete to a short exact sequence,
\begin{equation}
\label{equ:c}
  0 \rightarrow \underline{k}
    \rightarrow \underline{w}
    \rightarrow \underline{x}
    \rightarrow 0.
\end{equation}
The long exact $\Ext$ sequence implies that
$\Ext_{\sD/\cU}^1(\underline{\cW},\underline{k}) = 0$, 
so since $\underline{k}$ is a subobject and in particular a
subquotient of $\underline{w}$, the support tilting property of
$\underline{\cW}$ shows that $\underline{k}$ is a quotient of an
object from $\underline{\cW}$,
\[
  0 \rightarrow \underline{k^{\prime}}
    \rightarrow \underline{w^{\prime}}
    \rightarrow \underline{k}
    \rightarrow 0.
\]

Now, our assumption is that $\sD(\cX,\Sigma x) = 0$, and by equation
\eqref{equ:d} this implies $\sD(x,\Sigma \cX) = 0$ and in particular
$\sD(x,\Sigma \cW) = 0$.  By Proposition \ref{pro:Ext}(i), it follows
that $\Ext_{\sD/\cU}^1(\underline{x},\underline{\cW}) = 0$ because $x$
has no direct summands from $\cU$ while $\underline{x}$ has finite
projective dimension since $\sD/\cU$ has finite global dimension.  So
in particular $\Ext_{\sD/\cU}^1(\underline{x},\underline{w^{\prime}})
= 0$, and since the projective dimension of $\underline{x}$ is at most
one by \cite[thm.\ 4.3]{KoenigZhu} and \cite[2.1,
cor.]{KellerReiten2}, the long exact $\Ext$ sequence then implies
$\Ext_{\sD/\cU}^1(\underline{x},\underline{k}) = 0$.
Hence the exact sequence \eqref{equ:c} is split, and since
$\underline{w}$ is in $\underline{\cW}$ it follows that
$\underline{x}$ is isomorphic to an object of $\underline{\cW}$.  But
then the indecomposable $x$ is isomorphic to an object of $\cW$ since
$x$ is outside $\cU$, and hence $x$ is in $\cX$.
\end{proof}

\begin{Remark}
In the following proposition and in Section \ref{sec:examples} we will
consider a bijective correspondence between cluster tilting
subcategories and support tilting subcategories.

Tacitly, the correspondence is in fact between equivalence classes of
such subcategories, the equivalence relation being that subcategories
with the same essential closure are equivalent; cp.\ Remark
\ref{rmk:subcat}.
\end{Remark}

\begin{Proposition}
\label{pro:WtoV}
Assume that $\sD/\cU$ has finite global dimension and that each object
of $\sD/\cU$ has finite length.

Suppose that the following condition is satisfied: If
$\underline{\cW}$ is a support tilting subcategory of $\sD/\cU$ with
$S\Sigma^{-2}\underline{\cW} = \underline{\cW}$, then the maximal
$1$-orthogonal subcategory $\cX$ of Remark
\ref{rmk:WtoV} and Theorem \ref{thm:WtoV} is pre\-co\-ve\-ring and
preenveloping, and hence cluster tilting.

Then the projection functor $\sD \rightarrow \sD/\cU$ induces a
bijection between the cluster tilting subcategories of $\sD$ and the
support tilting subcategories of $\sD/\cU$ which are equal to their
image under $S\Sigma^{-2}$.
\end{Proposition}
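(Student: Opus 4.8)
The plan is to show that the projection functor $\sD \to \sD/\cU$ and the lifting construction of Remark \ref{rmk:WtoV}/Theorem \ref{thm:WtoV} give mutually inverse assignments between the two classes of subcategories; throughout, subcategories are identified with their essential closures, as in Remark \ref{rmk:subcat}.

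First I would check that projection is a well-defined assignment in the stated direction. Let $\cV$ be a cluster tilting subcategory of $\sD$. By Theorem \ref{thm:VtoW}, $\underline{\cV}$ is a support tilting subcategory of $\sD/\cU$. Moreover, since $\cV$ is cluster tilting, $S\Sigma^{-2}\cV = \cV$ by \cite[prop.\ 4.7.3]{KoenigZhu}, and applying the projection functor yields $S\Sigma^{-2}\underline{\cV} = \underline{S\Sigma^{-2}\cV} = \underline{\cV}$. Hence $\cV \mapsto \underline{\cV}$ lands among the support tilting subcategories of $\sD/\cU$ fixed by $S\Sigma^{-2}$.

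Next I would set up the lifting assignment. Let $\underline{\cW}$ be a support tilting subcategory of $\sD/\cU$ with $S\Sigma^{-2}\underline{\cW} = \underline{\cW}$. Represent it as in Remark \ref{rmk:WtoV} by a subcategory $\cW$ of $\sD$ whose objects have no direct summands from $\cU$; since $\sD/\cU$ is Krull--Schmidt and the projection functor induces a bijection between the isomorphism classes of indecomposables outside $\cU$ and of indecomposables of $\sD/\cU$, the subcategory $\cW$, and hence the subcategory $\cX = \add(\cW \cup \cT)$ of Remark \ref{rmk:WtoV}, is determined up to essential closure. By Theorem \ref{thm:WtoV}, $\cX$ is the unique maximal $1$-orthogonal lifting of $\underline{\cW}$, and by the hypothesis of the present proposition $\cX$ is precovering and preenveloping, hence cluster tilting. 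So $\underline{\cW} \mapsto \cX$ is well defined from the support tilting subcategories fixed by $S\Sigma^{-2}$ to the cluster tilting subcategories of $\sD$.

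Finally I would verify that the two assignments are inverse to one another. In one order this is immediate: the lifting $\cX$ of $\underline{\cW}$ satisfies $\underline{\cX} = \underline{\cW}$ by construction (Remark \ref{rmk:WtoV}). In the other order, start from a cluster tilting subcategory $\cV$ and write $\cV = \add(\cW \cup \cV_0)$, where $\cW$ collects those indecomposable summands of objects of $\cV$ that lie outside $\cU$ and $\cV_0 \subseteq \cU$; then $\underline{\cV} = \underline{\cW}$, so the lifting assignment applied to $\underline{\cV}$ produces the subcategory $\cX$ attached to this $\cW$. Since $\cV$ is itself a maximal $1$-orthogonal (indeed cluster tilting) lifting of $\underline{\cW}$, the uniqueness statement of Theorem \ref{thm:WtoV} (equivalently, the last paragraph of Remark \ref{rmk:WtoV}) forces $\cV = \cX$. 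This completes the proof. The only step requiring genuine care — and the place I would expect the argument to be most delicate — is this last bookkeeping around direct summands from $\cU$: a cluster tilting subcategory of $\sD$ may legitimately contain indecomposables from $\cU$, and one must be sure that stripping these off and then re-attaching the canonically determined $\cT$ recovers exactly $\cV$; this is precisely what the uniqueness in Theorem \ref{thm:WtoV} supplies.
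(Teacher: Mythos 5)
Your proposal is correct and follows essentially the same route as the paper: use Theorem \ref{thm:VtoW} together with \cite[prop.\ 4.7.3]{KoenigZhu} for one direction, Theorem \ref{thm:WtoV} plus the hypothesis for the other, and then deduce that the two assignments are mutually inverse from the uniqueness of the maximal $1$-orthogonal lifting (one composite being the identity by construction). The extra care you flag about direct summands from $\cU$ is exactly what the uniqueness clause of Theorem \ref{thm:WtoV} is invoked for in the paper as well.
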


\begin{proof}
The operation $\cV \stackrel{\pi}{\rightarrow} \underline{\cV}$
induced by the projection functor sends full sub\-ca\-te\-go\-ri\-es
of $\sD$ to full subcategories of $\sD/\cU$.  By Theorem
\ref{thm:VtoW}, it sends cluster tilting subcategories to support
tilting subcategories.  Cluster tilting subcategories are equal to
their image under $S\Sigma^{-2}$ by \cite[thm.\ 4.7.3]{KoenigZhu}, so
the support tilting subcategories arising from this are too.

The operation $\underline{\cW} \stackrel{\lambda}{\rightarrow} \cX$ of
Remark \ref{rmk:WtoV} sends full subcategories of $\sD/\cU$ to full
subcategories of $\sD$.  By Theorem \ref{thm:WtoV} and the assumption
of the present proposition, it sends support tilting subcategories
which are equal to their image under $S\Sigma^{-2}$ to cluster tilting
subcategories.

Let $\cV$ be cluster tilting in $\sD$.  Then $\cV$ and
$\lambda\pi(\cV)$ are both liftings of $\pi(\cV) = \underline{\cV}$ to
$\sD$, and they are both cluster tilting and so in particular maximal
$1$-orthogonal.  Hence $\lambda\pi(\cV) = \cV$ by Theorem
\ref{thm:WtoV}.

Let $\underline{\cW}$ be support tilting in $\sD/\cU$ with
$S\Sigma^{-2}\underline{\cW} = \underline{\cW}$.  Then $\cX =
\lambda(\underline{\cW})$ is a lifting of $\underline{\cW}$ to $\sD$,
that is, $\pi\lambda(\underline{\cW}) = \underline{\cW}$.

This shows that $\pi$ and $\lambda$ are mutually inverse maps between
the set of cluster tilting subcategories of $\sD$ and the set of
support tilting sub\-ca\-te\-go\-ri\-es of $\sD/\cU$ which are equal
to their image under $S\Sigma^{-2}$, and the proposition follows.
\end{proof}

\begin{Remark}
The situation of the proposition occurs in practice, as we will see in
some of the examples of the next section.  It would be interesting to
find a simple criterion which guarantees that we are in this situation.
\end{Remark}


\section{Examples}
\label{sec:examples}

\subsection{Cluster categories}
\label{subsec:cluster}

Let $Q$ be a finite quiver without loops or cycles, let $\sD$ be
the cluster category of type $Q$ over $k$, and consider the cluster
tilting subcategory $\cU = \add kQ$; cf.\ \cite{BMRRT}.

The conditions of Setup \ref{set:blanket} hold by \cite[sec.\
1]{BMRRT} and \cite[thm.\ 3.3(b)]{BMRRT}.

The quotient category $\sD/\cU$ is equivalent to $\mod\, kQ$, as
follows from the theory of \cite{BMR2}.  In particular, $\sD/\cU$ has
finite global dimension and all its objects have finite length.  Since
$\sD$ is $2$-Calabi-Yau as follows from \cite[sec.\ 1]{BMRRT}, the
functor $S\Sigma^{-2}$ is equivalent to the identity.

We claim that we are in the situation of Proposition \ref{pro:WtoV}.
To see this, we must consider a support tilting subcategory
$\underline{\cW}$ of $\sD/\cU \simeq \mod\, kQ$ and show that the
subcategory $\cX$ of Remark \ref{rmk:WtoV} and Theorem \ref{thm:WtoV}
is precovering and preenveloping.  But $\underline{\cW}$ is, in
particular, a partial tilting subcategory so contains only finitely
many isomorphism classes of indecomposable objects; see \cite[lem.\
VI.2.4 and cor.\ VI.4.4]{ASS}.  Since $\cU$ also contains only
finitely many isomorphism classes of in\-de\-com\-po\-sa\-ble objects,
the same is true for $\cX$ which is hence precovering and
preenveloping.

Proposition \ref{pro:WtoV} therefore says that the projection functor
$\sD \rightarrow \sD/\cU$ induces a bijection between the cluster
tilting subcategories of $\sD$ and the support tilting subcategories
of $\sD/\cU$.

Finally, observe that the cluster tilting subcategories of $\sD$
contain only finitely many isomorphism classes of indecomposable
objects, as follows from \cite[thm.\ 3.3]{BMRRT}, so the cluster
tilting subcategories are in bijection with the isomorphism classes of
basic cluster tilting objects of $\sD$.  Likewise, as mentioned, the
support tilting subcategories of $\sD/\cU$ contain only finitely many
isomorphism classes of indecomposable objects, so the support tilting
subcategories are in bijection with the isomorphism classes of basic
support tilting objects of $\sD/\cU$.

Hence the projection functor $\sD \rightarrow \sD/\cU$ induces a
bijection between the isomorphism classes of basic cluster tilting
objects of the cluster category $\sD$ and the isomorphism classes of
basic support tilting objects in $\sD/\cU \simeq \mod\, kQ$.

This is precisely the bijection of Ingalls and Thomas from Theorem A
of the Introduction.

\subsection{Derived categories}

Let $Q$ be a finite quiver without loops and cycles and set $\sD$
equal to $\Df(kQ)$, the finite derived category of the path algebra
$kQ$.  Consider $kQ$ itself as an object of $\sD$ and set $\cU$ equal
to $\add$ of the orbit of $kQ$ under $S\Sigma^{-2}$; cf.\
\cite[4.5.2]{KoenigZhu}. 

The conditions of Setup \ref{set:blanket} are satisfied: The $\Hom$
spaces of $\sD$ are finite dimensional by an explicit computation with
projective resolutions.  Idempotents in $\sD$ split because, by
\cite[prop.\ 3.2]{BN}, they do so in $\sD(kQ)$, the derived category
of all complexes.  And there is a Serre functor by \cite[3.6]{Happel}
and \cite[thm.\ I.2.4]{RVdB}.

Consider the module category $\mod\, kQ$.  Its Auslander-Reiten quiver
(AR quiver) $\Gamma$ typically consists of a preprojective component
of the form $\BN Q$, a regular component, and a preinjective component
which is the mirror image of the preprojective component.  The AR
quiver of $\sD$ is obtained by taking a countable number of copies of
$\Gamma$ and gluing them together, preinjective components to
preprojective components; cf.\ \cite{Happel}.  It typically looks as
follows, where the zig zags indicate the subcategory $\cU$.
\[
  \xymatrix @-1.6pc @! {
    & & & & & & & & *{} \ar@{.}[ddddddddd] & & *{} \ar@{.}[ddddddddd] & & & & & & & & & *{} \ar@{.}[ddddddddd] & & *{} \ar@{.}[ddddddddd] & & & & & & & & \\
    & & & & & & & & & & & & & & & & & & & & & & & & & & & & & \\
    *{} \ar@{.}[r] & *{} \ar@{-}[rrrrr] & & & & *{} & *{} \ar@{.}[r] & *{} &&&&*{} \ar@{.}[r] & *{} \ar@{-}[rrrrr] & & & & *{} & *{} \ar@{.}[r] & *{} &&&&*{} \ar@{.}[r] & *{} \ar@{-}[rrrrr] & & & & *{} & *{} \ar@{.}[r] & *{} \\
    & & & & & & & & & & & & & & & & & & & & & & & & & & & & & \\
    & & & *{} \ar@{-}[uurr] & & & & & & & & & & & *{} \ar@{-}[uurr] & & & & & & & & & & & *{} \ar@{-}[uurr] & & & & \\
    & & & & *{} \ar@{-}[ul] & & & & & & & & & & & *{} \ar@{-}[ul] & & & & & & & & & & & *{} \ar@{-}[ul] & & & \\
    & & & & & & & & & & & & & & & & & & & & & & & & & & & & & \\
    *{} \ar@{.}[r] & *{} \ar@{-}[rrrrr] & *{} \ar@{-}[uurr] & & & & *{} \ar@{.}[r] & *{} &&&&*{} \ar@{.}[r] & *{} \ar@{-}[rrrrr] & *{} \ar@{-}[uurr] & & & & *{} \ar@{.}[r] & *{} &&&&*{} \ar@{.}[r] & *{} \ar@{-}[rrrrr] & *{} \ar@{-}[uurr] & & & & *{} \ar@{.}[r] & *{} \\
    & & & & & & & & & & & & & & & & & & & & & & & & & & & & & \\
    & & & & & & & & *{} & & *{} & & & & & & & & & *{} & & *{} & & & & & & & & \\
                     }
\]
The abelian quotient category $\sD/\cU \simeq \mod\, \cU$ is the
direct sum of countably many copies of $\mod\, kQ$, so it is clear
that $\sD/\cU$ has finite global dimension and that each of its
objects has finite length.

Note that in the AR quiver of $\sD$, the copies of $\Gamma$ which are
glued to obtain the quiver do not correspond to the copies of $\mod\,
kQ$ whose direct sum is $\sD/\cU$.  The former overlap with the
vertices corresponding to $\cU$, the latter correspond to their
complement. 


We claim that we are again in the situation of Proposition
\ref{pro:WtoV}, so the projection functor induces a bijection between
the cluster tilting subcategories of $\sD$ and the support tilting
subcategories of $\sD/\cU$ which are equal to their image under
$S\Sigma^{-2}$.

To see this, we must let $\underline{\cW}$ be a support tilting
subcategory of $\sD/\cU$ with $S\Sigma^{-2}\underline{\cW} =
\underline{\cW}$ and show that the lifted subcategory $\cX$ of Remark
\ref{rmk:WtoV} and Theorem \ref{thm:WtoV} is precovering and
preenveloping.  However, when $\underline{\cW}$ is support tilting
then its intersection with each copy of $\mod\, kQ$ inside $\sD/\cU$
is a partial tilting subcategory, and so only contains finitely many
isomorphism classes of indecomposable objects; cf.\ Section
\ref{subsec:cluster}.  This easily implies that $\cW$ only contains
finitely many isomorphism classes corresponding to vertices in each of
the copies of $\Gamma$ which are glued to form the AR quiver of $\sD$.
As the same is the case for $\cU$, it follows that it also holds for
$\cX$.

However, if $d$ is an indecomposable object of $\sD$, then the vertex
of $d$ sits in one of the copies of $\Gamma$.  The only indecomposable
objects of $\sD$ which have non-zero morphisms to and from $d$ are the
ones corresponding to vertices in that copy of $\Gamma$ and the two
neighbouring copies.  But this means that only finitely many
isomorphism classes of indecomposable objects from $\cX$ have non-zero
morphisms to and from $d$ whence $\cX$ is precovering and
preenveloping.

\subsection{A category of type $A_{\infty}$}

Let $R = k[X]$ be the polynomial algebra and view $R$ as a DG algebra
with zero differential and $X$ placed in homological degree $1$.  Let
$\sD$ be $\Df(R)$, the derived category of DG $R$-modules with finite
dimensional homology over $k$.

This category of type $A_{\infty}$ was studied in \cite{HJ} where it
was shown to exhibit cluster behaviour.  In particular, it was shown
that its maximal $1$-orthogonal subcategories are in bijection with
the set of maximal configurations of non-crossing arcs connecting
non-neighbouring integers.  It was also shown that not all maximal
$1$-orthogonal subcategories are cluster tilting; indeed, a precise
criterion was given to decide whether a maximal configuration of arcs
determines a cluster tilting subcategory.

The category $\sD$ satisfies Setup \ref{set:blanket} by \cite{HJ}.  It
is $2$-Calabi-Yau so the functor $S\Sigma^{-2}$ is equivalent to the
identity.  Its AR quiver is $\BZ A_{\infty}$.  Let $\cU$ be $\add$ of
infinitely many indecomposable objects, the first of which are
indicated by solid dots in the following sketch of the AR quiver.
\[
  \xymatrix @-1.6pc @! {
    \rule{2ex}{0ex} \ar[dr] & & \vdots \ar[dr] & & \vdots \ar[dr] & & \vdots \ar[dr] & & \vdots \ar[dr] & & \vdots \ar[dr] & & \rule{2ex}{0ex} \\
    & \circ \ar[ur] \ar[dr] & & \circ \ar[ur] \ar[dr] & & \bullet \ar[ur] \ar[dr] & & \circ \ar[ur] \ar[dr] & & \circ \ar[ur] \ar[dr] & & \cdots & \\
    \cdots \ar[ur]\ar[dr]& & \circ \ar[ur] \ar[dr] & & \circ \ar[ur] \ar[dr] & & \bullet \ar[ur] \ar[dr] & & \circ \ar[ur] \ar[dr] & & \circ \ar[ur] \ar[dr] \\
    & \circ \ar[ur] \ar[dr] & & \circ \ar[ur] \ar[dr] & & \bullet \ar[ur] \ar[dr] & & \circ \ar[ur] \ar[dr] & & \circ \ar[ur] \ar[dr] & & \cdots & \\
    \cdots \ar[ur]\ar[dr]& & \circ \ar[ur] \ar[dr] & & \circ \ar[ur] \ar[dr] & & \bullet \ar[ur] \ar[dr] & & \circ \ar[ur] \ar[dr] & & \circ \ar[ur] \ar[dr] \\
    & \circ \ar[ur] & & \circ \ar[ur] & & \bullet \ar[ur] & & \circ \ar[ur] & & \circ \ar[ur] & & \cdots & \\
               }
\]
It was shown in \cite{HJ} that $\cU$ is a cluster tilting subcategory
of $\sD$ and that $\cU$ is equivalent to the free category on its AR
quiver $Q$,
\[
  \xymatrix{
    \bullet \ar[r] & \bullet & \bullet \ar[r] \ar[l] & \bullet & \bullet \ar[r] \ar[l] & \cdots.
           }
\]
Accordingly, $\sD/\cU \simeq \mod\, \cU$ is equivalent to $\rep\, Q$,
the category of finitely presented representations of $Q$, which is
hereditary by \cite[sec.\ II.1]{RVdB}.  Since $Q$ is locally finite,
each object of $\rep\, Q$ has finite length.

It follows that Theorems \ref{thm:VtoW} and \ref{thm:WtoV} both apply,
so cluster tilting sub\-ca\-te\-go\-ri\-es of $\sD$ project to support
tilting subcategories of $\sD/\cU$, and support tilting subcategories
of $\sD/\cU$ can be lifted uniquely to maximal $1$-orthogonal
subcategories of $\sD$.

In particular, any configuration of arcs which determines a cluster
til\-ting subcategory of $\sD$ also gives rise to a support tilting
subcategory of $\sD/\cU$, so we get an ample supply of such
subcategories.

We do not know whether Proposition \ref{pro:WtoV} applies to this
situation.  Support tilting subcategories of $\sD/\cU$ lift to maximal
$1$-orthogonal sub\-ca\-te\-go\-ri\-es of $\sD$, but not all such
subcategories are cluster tilting.  It would be interesting to
determine whether or not Proposition \ref{pro:WtoV} does apply.

\medskip
{\bf Acknowledgement. }
We thank Bill Crawley-Boevey, Colin Ingalls, and Hugh Thomas for
answering several questions on support tilting.  The se\-cond author
carried out part of the work during a visit to Osaka Prefecture
University in July 2008, and wishes to express his sincere gratitude
to his host, Kiriko Kato.

\end{document}